\newtheorem{fed}{Definition}[section]
\newtheorem{teo}[fed]{Theorem}
\newtheorem{rem}[fed]{Remark}
\newtheorem{ej}[fed]{Example}
\def\ds{\displaystyle}
\begin{document}
\title{A reformulation of an ordinary differential equation
 \footnote{ AMS Subject Classification: Primary 34A12, 34A34, 34A30,
34A05.}
 \footnote{ Key words: ordinary differential equation, initial value problem,
 system of linear ordinary differential equations, initial condition.}
 }
\author{Oscar A. Barraza\footnote{ \textit{E-mail address:}
\texttt{oscar.barraza@econo.unlp.edu.ar }
}\\\scriptsize{Departamento de Ciencias
Complementarias}\\\scriptsize{Facultad de Ciencias Económicas,
UNLP} \footnote{ \textit{Post address:} calle 6 nro. 777, (1900) -
La Plata, Argentina } \footnote{ \textit{Telephone number:}
54-221-4236771, int.128} \footnote{ \textit{Fax number:}
54-221-4236769}}

\date{\vspace{.3cm}January, 2013.}
\maketitle \vskip-2cm


\begin{abstract}

\noindent The purpose of this note is to present a formulation of
a given nonlinear ordinary differential equation into an
equivalent system of linear ordinary differential equations. It is
evident that the easiness of a such procedure would be able to
open a new way in order to calculate or approximate the solution
of an ordinary differential equation. Some examples are presented.

\end{abstract}

\vfill \eject



\section{Introduction}

Let us consider an initial value problem (IVP) of first order like

\begin{equation}\label{PHVI}
\left\{
\begin{array}{rcl}
y'=\dfrac{dy}{dt}\!\! &=&\!\! \varphi(y) \\
y(t_0)\!\! &=&\!\! y_{0},
\end{array}
\right.
\end{equation}
\ \\
 or
\ \\
\begin{equation}\label{PNHVI}
\left\{
\begin{array}{rcl}
y'=\dfrac{dy}{dt}\!\! &=&\!\! \varphi(y) + g(t) \\
y(t_0)\!\! &=&\!\! y_{0},
\end{array}
\right.
\end{equation}
\ \\
where the function $\varphi(y)$ is supposed to be sufficiently
smooth on an open interval $(t_0-\delta, t_0+\delta )$ for some
$\delta
> 0$.

Usually, problems like (\ref{PHVI}) or (\ref{PNHVI}) are solved by
means of known special methods according to the particular type of
function $\varphi$, for example, separation of variables,
homogeneous equations, linear equations, Bernoulli equations, etc.
However, the difficulty to find out an exact solution is related
to some particular facts associated to the particular method of
resolution. For instance, when the method of separation of
variables is attempted to be applied the existence of a general
primitive function of $\dfrac {1}{\varphi(y)}$ is required; but
the fact of computing such a primitive is not a trivial challenge.

The goal in this work consists to transform problems like
(\ref{PHVI}) or (\ref{PNHVI}) into a following system of infinite
linear ordinary differential equations
\begin{equation}\label{SLEDO}
\begin{array}{cc}
  \mathbf{X'=AX}(t),\qquad \text{or} \quad & \mathbf{X}'=\mathbf{AX}(t) + \mathbf{b}(t) \\
\end{array}
\end{equation}
\ \\
such that $\mathbf{X}(t)=\left(%
\begin{array}{c}
  x_0(t) \\
  x_1(t) \\
  x_2(t) \\
  x_3(t) \\
  \vdots \\
\end{array}%
\right)$ in an infinite dimensional function vector, \hfill
 \vskip4mm
\ \\
$\mathbf{A}=\left(%
\begin{array}{ccccc}
  a_{00} & a_{01} & a_{02} & a_{03} & \cdots \\
  a_{10} & a_{11} & a_{12} & a_{13} & \cdots \\
  a_{20} & a_{21} & a_{22} & a_{23} & \cdots \\
  a_{30} & a_{31} & a_{32} & a_{33} & \cdots \\
  \vdots & \vdots & \vdots & \vdots & \ddots \\
\end{array}%
\right)$ is a real constant entries infinite dimensional matrix
and \hfill
 \vskip4mm
\ \\
$\mathbf{b}(t)=\left(%
\begin{array}{c}
  g(t) \\
  0 \\
  0 \\
  0 \\
  \vdots \\
\end{array}%
\right)$ is an other infinite dimensional function vector.
\vskip4mm

The organization of this work is as follows. The aim of Section 2
consists to display some particular transforms through some simple
examples. Section 3 is devoted to expose the procedure to
transform an initial value problem into a system of linear
ordinary differential equations. In this section the equivalence
between the solutions of the IVP and the system is shown. In
Section 4 some suggestions about possible generalizations are
presented.

\section{Some examples}

Before developing the details of the procedure, two examples are
presented in order to show how this technique works.

\begin{ej}

Let us present an easy example because it is an initial value
problem where the ordinary differential equation is linear. That
is,
\begin{equation}\label{1erEjemplo}
\left\{
\begin{array}{rcl}
y'=\dfrac{dy}{dt}\!\! &=&\!\! y-t \\
y(0)\!\! &=&\!\! y_{0},
\end{array}
\right.
\end{equation}
\end{ej}
for some real number $y_0$.

\begin{itemize}
\item It is straightforward to compute the exact solution because
the ODE is linear. Then the general solution is
\[
y(t) = K.e^t + t + 1,
\]
for an arbitrary real constant $K$. Using the initial condition
the constant is $K = y_0 - 1$. So, the solution of the IVP is
\[
y = (y_0 - 1).e^t + t + 1.
\]

\item Let us present the transformation putting
\[
x_0(t) = y(t)
\]
then
\[
x_0'(t) = y'(t)= y(t) - t = x_1(t)\quad \therefore x_0' = x_1.
\]
So,
\[
x_1'(t) = (y(t) - t)' = y'(t) - 1 = y(t) - t - 1 := x_2(t)\quad
\therefore x_1' = x_2.
\]
One more step
\[
x_2'(t) = (y(t) - t - 1)' = y'(t) - 1 = y(t) - t - 1 :=
x_3(t)\quad \therefore x_2' = x_3.
\]
Then, in general for all integer $j\ge 0$ it is deduced that \[
x_j' = x_{j+1}.
\]

In this way the matrix system is
\[
\mathbf{X}'(t) = \underbrace{\left(%
\begin{array}{ccccc}
  0 & 1 & 0 & 0 & \cdots \\
  0 & 0 & 1 & 0 & \cdots \\
  0 & 0 & 0 & 1 & \cdots \\
  0 & 0 & 0 & 0 & \cdots \\
  \vdots & \vdots & \vdots & \vdots & \ddots \\
\end{array}%
\right)}_{= \ A } \mathbf{X}(t) + \underbrace{\left(%
\begin{array}{c}
  -t \\
  0 \\
  0 \\
  0 \\
  \vdots \\
\end{array}%
\right)}_{= \ b(t)}.
\]

It is recalled that the exponential matrix of a matrix $B$
belonging to a Banach algebra $\mathcal{A}$ is defined by
\begin{equation}\label{SERIEEXPONENCIAL}
\begin{array}{cc}
  e^{\mathbf{B}} & = \mathbf{I + B} + \frac{1}{2!}\mathbf{B}^2 +
\frac{1}{3!}\mathbf{B}^3 + \dots = \\
  \ &\ \\
  \ & = \mathbf{I} + \ds \sum_{j=1}^{+\infty} \frac{1}{j!}\mathbf{B}^j = \ds \lim_{n\to +\infty} S_n \\
\end{array}
\end{equation}
\ \\
where $\mathbf{I}$ is the identity matrix and $S_n = \mathbf{I} +
\ds \sum_{j=1}^{n-1} \frac{1}{j!}\mathbf{B}^j$ is the
$n$th-partial sum. This series is absolutely convergent in the
Banach algebra $\mathcal{A}$  because on one side
$$\|\mathbf{B}^k\| \le \|\mathbf{B}\|^k$$ for all integer $k\ge 1,$ and then $$\|e^\mathbf{B}\| \le
1 + \sum_{j=1}^{+\infty} \frac{1}{j!}\|\mathbf{B}\|^j =
e^{\|\mathbf{B}\|},$$
 showing that the exponential matrix $e^B$ is bounded in the norm of the Banach algebra $\mathcal{A}$
 for all matrix $B \in \mathcal{A}$. On the other side the sequence of the partial sums $\left\{ S_n \right\}_n$ is Cauchy
 convergent. For the details the reader is referred, for instance, to (\cite{Ru}).

Taking into account the variation of constants formula, the
solution of the previous linear system is
\[
\mathbf{X}(t) = \ds{e^{A.t}.\mathbf{X}(0) + \int_{0}^t
e^{A(t-s)}.b(s)\, ds }
\]
 and, as a consequence, the solution of the given IPV is then
 \[
 \begin{array}{rl}
  y(t) \!\!\! & = x_0(t) = (1\textrm{st.\ row\ of\ } \mathbf{X}(t)) = \\
& = (1\textrm{st.\ row\ of\ } e^{\mathbf{A}t}).\left(%
\begin{array}{c}
  c_0 \\
  c_1 \\
  c_2 \\
  c_3 \\
  \vdots \\
\end{array}%
\right) + \ds \int_{0}^t (1\textrm{st.\ row\ of\ } e^{\mathbf{A}(t-s)}).\left(%
\begin{array}{c}
  -s \\
  0 \\
  0 \\
  0 \\
  \vdots \\
\end{array}%
\right) \, ds  \\
& = c_0 + c_1 t + \dfrac{1}{2!} c_2 t^2 + \dfrac{1}{3!} c_3 t^3 +
\dots + \ds \int_{0}^t (-s) \, ds = \\
& = c_0 + c_1 t + \dfrac{1}{2!} c_2 t^2 + \dfrac{1}{3!} c_3 t^3 +
\dots \left. -\dfrac{s^2}2 \right|_{0}^t. \\
& = c_0 + c_1 t +
\dfrac{1}{2!} c_2 t^2 + \dfrac{1}{3!} c_3 t^3 +
\dots  -\dfrac{t^2}2. \\
\end{array}
\]
This implies that
 \[
 x_0(0) = c_0 = y_0
 \]
and that from the equations of the system
 \[ x_j(0) = \dfrac{d^j\, x_0}{dt^j}(0) = \left\{%
 \begin{array}{ll}
    c_j, & \hbox{for\  $j\ge 1$\ and\  $j\neq 2$,} \\
    c_2 - 1, & \hbox{ for\ $j = 2$,} \\
 \end{array}%
 \right.
 \]
so, the solution is written as
\[
\begin{array}{ll}
  y(t) \!\!\! & = y_0 + y_0 t + \left( \dfrac{1}{2!} y_0 - \dfrac12 \right)
t^2 + \dfrac{1}{3!}\left( y_0 - 1 \right) t^3 + \dots +
\dfrac{1}{j!}\left( y_0 - 1 \right) t^j + \dots  \\
   & = (y_0 -1).e^t + t + 1. \\
\end{array}
\]

\item The reader can observe that through both procedures the same
solution is reached for all real $t$.

\end{itemize}

\begin{ej}
\begin{equation}\label{2doEjemplo}
\left\{
\begin{array}{rcl}
y'=\dfrac{dy}{dt}\!\! &=&\!\! y^2 \\
y(0)\!\! &=&\!\! y_{0},
\end{array}
\right.
\end{equation}
\end{ej}

\begin{itemize}
\item First of all, the general solution is presented. This
solution can be easily computed by means of the separation of
variables method. The initial value $y_0$ must be different to
zero in order to guarantee the nontrivial solution. Then, under
the assumption $y_0\ne 0$, the exact solution of problem
(\ref{2doEjemplo}) defined for $t\ne\frac1{y_0}$ is given by
\begin{equation}\label{SOLU2doEjemplo}
y(t)=\dfrac{y_0}{1-y_0(t-t_0)}=\dfrac{y_0}{1-y_0.t},
\end{equation} since $t_0 = 0$. It is mandatory to keep the domain
$$\left( -\dfrac1{y_0}, +\infty\right)\quad  \text{if\ }\quad y_0>0$$
or
$$\left( -\infty, \dfrac1{|y_0|}\right)\quad  \text{if\ }\quad y_0<0.$$

For $y_0=0$ the solution obtained is the trivial one $y(t) = 0$,
for all $t$.

\item Now, we show how to transform problem (\ref{2doEjemplo})
into a system of infinite linear ordinary equations.

Let us define
$$x_0(t)=y(t)$$
with $x_0(0)=y_0$. Then,
$$x_1(t):=x_0'(t)=y'(t)=y^2(t),$$ so
$$x_1'(t)=2.y(t).y'(t)=2y^3(t).$$
The next variable $x_2$ is defined by $$x_2(t) =y^3(t),$$ and then
$$x_2'(t)=3.y^2(t).y'(t)=3y^4(t).$$ Continuing with the precedent scheme let
us define $$x_3(t)=y^4(t),$$ and then
$$x_3'(t)=4.y^3(t).y'(t)=4y^5(t),$$ and so on.

The general expression in the present example is $$x_j(t) =
(y(t))^{j+1},$$ for all integer $j\ge 0$.

Thus the set of the infinite linear differential equations can be
summarized by $$x_{j-1}'(t)=j.x_j(t),\qquad \text{for\ all\
integer\ } j\ge 1,$$ with the initial condition $x_0(t_0)=y_0$.
So, this system is expressed in its matrix form as
\begin{equation}\label{2doEjemploMatricial}
\begin{array}{c}
  \mathbf{X}'(t)=\left(%
\begin{array}{c}
  x_1(t) \\
  2.x_2(t) \\
  3.x_3(t) \\
  4.x_4(t) \\
  \vdots \\
\end{array}%
\right) = \underbrace{\left(%
\begin{array}{cccccc}
  0 & 1 & 0 & 0 & 0 & \cdots \\
  0 & 0 & 2 & 0 & 0 & \cdots \\
  0 & 0 & 0 & 3 & 0 & \cdots \\
  0 & 0 & 0 & 0 & 4 & \cdots \\
  \vdots & \vdots & \vdots & \vdots & \vdots & \ddots \\
\end{array}%
\right)}_{=\ \mathbf{A}}\left(%
\begin{array}{c}
  x_0(t) \\
  x_1(t) \\
  x_2(t) \\
  x_3(t) \\
  \vdots \\
\end{array}%
\right) = \\
\ \\
  \hskip-1.7cm = \mathbf{A.X}(t)
 \\
\end{array}
\end{equation}
where $$\mathbf{X}(t)=\left(%
\begin{array}{c}
  x_0(t) \\
  x_1(t) \\
  x_2(t) \\
  x_3(t) \\
  \vdots \\
\end{array}%
\right)$$ and $$\mathbf{A}= \left(%
\begin{array}{cccccc}
  0 & 1 & 0 & 0 & 0 & \cdots \\
  0 & 0 & 2 & 0 & 0 & \cdots \\
  0 & 0 & 0 & 3 & 0 & \cdots \\
  0 & 0 & 0 & 0 & 4 & \cdots \\
  \vdots & \vdots & \vdots & \vdots & \vdots & \ddots \\
\end{array}%
\right).$$

The general solution of system (\ref{2doEjemploMatricial}) has the
form
 \vskip-0.7cm
\begin{equation}\label{SOLULINEAL2}
\mathbf{X}(t)= e^{\mathbf{A}t}.\left(%
\begin{array}{c}
  c_0 \\
  c_1 \\
  c_2 \\
  c_3 \\
  \vdots \\
\end{array}%
\right),
\end{equation}

 \vskip-1.4cm
where $\left(%
\begin{array}{c}
  c_0 \\
  c_1 \\
  c_2 \\
  c_3 \\
  \vdots \\
\end{array}%
\right) $ is the vector of arbitrary constants.
 \vskip-0.1cm

 \item Finally, taking into account (\ref{SOLULINEAL2}), (\ref{SERIEEXPONENCIAL})
and each one of the powers of the matrix $\mathbf{A}$, that is

$\mathbf{A}^2=\left(%
\begin{array}{ccccccc}
  0 & 0 & 2 & 0 & 0 & 0 &\cdots \\
  0 & 0 & 0 & 6 & 0 & 0 &\cdots \\
  0 & 0 & 0 & 0 & 12 & 0 &\cdots \\
  0 & 0 & 0 & 0 & 0 & 20 &\cdots \\
  \vdots & \vdots & \vdots & \vdots & \vdots & \vdots & \ddots \\
\end{array}%
\right),$

$\mathbf{A}^3=\left(%
\begin{array}{ccccccc}
  0 & 0 & 0 & 6 & 0 & 0 & \cdots \\
  0 & 0 & 0 & 0 & 24 & 0 & \cdots \\
  0 & 0 & 0 & 3 & 0 & 60 & \cdots \\
  0 & 0 & 0 & 0 & 0 & 0 & \cdots \\
  \vdots & \vdots & \vdots & \vdots & \vdots & \vdots & \ddots \\
\end{array}%
\right),$

and so on, the general solution of problem (\ref{2doEjemplo}) is
given by
 $$\begin{array}{c}
  y(t) = x_0(t)=\left(\text{1st.\ row\ of\ }e^{\mathbf{A}t}\right).\left(%
 \begin{array}{c}
  c_0 \\
  c_1 \\
  c_2 \\
  c_3 \\
  \vdots \\
 \end{array}%
 \right)=\hskip4.5cm \\
   = \left(\text{1st.\ row\ of\ }\left(\mathbf{I} + \ds \sum_{j=1}^{+\infty}
   \frac1{j!} (\mathbf{A}t)^j\right)\right).\left(%
 \begin{array}{c}
  c_0 \\
  c_1 \\
  c_2 \\
  c_3 \\
  \vdots \\
 \end{array}%
 \right)= \hskip2cm \\
 \end{array}
  $$

$$
\begin{array}{c}
 \hskip1.3cm  = \left(\text{1st.\ row\ of\ }\mathbf{I} + \ds \sum_{j=1}^{+\infty}
   \frac1{j!} \left(\text{1st.\ row\ of\ }(\mathbf{A})^j\right)t^j\right).\left(%
 \begin{array}{c}
  c_0 \\
  c_1 \\
  c_2 \\
  c_3 \\
  \vdots \\
 \end{array}%
 \right)  = \\
 \hskip0.7cm  = c_0 + c_1 t + \frac1{2!}.2! c_2 t^2 + \frac1{3!}.3! c_3 t^3 +
  \dots + \frac1{j!}.j! c_j t^j + \dots =  \\
 \ \\
 \hskip-1.8cm = c_0 + c_1 t + c_2 t^2 + c_3 t^3 + \dots + c_j t^j + \dots  \\ \\
\end{array}
$$

 Since the initial condition satisfies $y_0=y(0)=x_0(0)=c_0$ and
for all integer $j\ge 0$ it is clear that $x_j(t) = (y(t))^{j+1}$,
it is deduced that $c_j = x_j(0) = (y(0))^{j+1} = c_0^{j+1}$ for
all $j \ge 1$. As a consequence the solution calculated above
takes the form
\begin{equation}\label{SOLUserie2}
\begin{array}{c}
 y(t) = c_0 + c_0^2 t + c_0^3 t^2 + c_0^4 t^3 + \dots + c_0^{j+1}
t^j + \dots = \hskip1.9cm \\
\ \\
 = c_0 \left[1 + c_0 t + (c_0 t)^2 + (c_0 t)^3 + \dots
+ (c_0 t)^j + \dots \right] = \\
\ \\
 = c_0.\dfrac{1}{1-c_0.t} =
\dfrac{y_0}{1-y_0.t}, \hskip4.7cm \\
\end{array}
\end{equation}
defined under the assumption $|y_0.t| < 1$. Note that in the open
interval $\left(-\dfrac1{|y_0|}, \dfrac1{|y_0|}\right)$ solutions
(\ref{SOLU2doEjemplo}) and (\ref{SOLUserie2}) agree each to other.
According to the sign of $y_0$, the domain of this solution could
be extended to $+\infty$ or to $-\infty$.

For the special case where $y_0=0$, the solution by the series
expansion is the zero solution as before.
\end{itemize}

\begin{ej}
\begin{equation}\label{3erEjemplo}
\left\{
\begin{array}{rcl}
y'=\dfrac{dy}{dt}\!\! &=&\!\! e^y \\
y(0)\!\! &=&\!\! y_{0},
\end{array}
\right.
\end{equation}
\end{ej}

\begin{itemize}
\item As in the previous example the solution of problem
(\ref{2doEjemplo}) can be rapidly computed by means of the
separation of variables method to obtain for all initial value
$y_0$ that
\begin{equation}\label{SOLU3erEjemplo}
y(t)=-\ln(e^{-y_0} -t),
\end{equation} Its domain, for $y_0$ given, is defined by the condition
$e^{-y_0}-t>0$ which is equivalent to\quad $t < e^{-y_0}$,\quad
that is\quad $e^{y_0}.t<1$.

\item Let us pass to the transformation of problem
(\ref{3erEjemplo}) into a system of infinite linear ordinary
equations.

Let us define
$$x_0(t)=y(t)$$
with $x_0(0)=y_0$. So
$$x_1(t):=x_0'(t)=y'(t)=e^y(t),$$ and
$$x_1'(t)=e^{y(t)}.y'(t)=e^{2y(t)}.$$
Next the variable $x_2$ is defined by $$x_2(t) =e^{2y(t)},$$ and
then
$$x_2'(t)=2.e^{2y(t)}.y'(t)=2e^{3y(t)}.$$ Let
us continue defining $$x_3(t)=e^{3y(t)},$$ thus
$$x_3'(t)=3.e^{3y(t)}.y'(t)=3.e^{4y(t)},$$ and so on.

The general expression in this third example is $$x_j(t) =
e^{j.y(t)},$$ for all integer $j\ge 1$.

Therefore the set of the infinite linear differential equations
can be briefly expressed by $$x_{j-1}'(t)=(j-1).x_j(t),\qquad
\text{for\ all\ integer\ } j\ge 1,$$ with the initial condition
$x_0(t_0)=y_0$. Consequently, this system is expressed in its
matrix form as
\begin{equation}\label{3erEjemploMatricial}
\begin{array}{c}
  \mathbf{X}'(t)=\left(%
\begin{array}{c}
  x_1(t) \\
  x_2(t) \\
  2.x_3(t) \\
  3.x_4(t) \\
  \vdots \\
\end{array}%
\right) = \underbrace{\left(%
\begin{array}{cccccc}
  0 & 1 & 0 & 0 & 0 & \cdots \\
  0 & 0 & 1 & 0 & 0 & \cdots \\
  0 & 0 & 0 & 2 & 0 & \cdots \\
  0 & 0 & 0 & 0 & 3 & \cdots \\
  \vdots & \vdots & \vdots & \vdots & \vdots & \ddots \\
\end{array}%
\right)}_{=\ \mathbf{A}}\left(%
\begin{array}{c}
  x_0(t) \\
  x_1(t) \\
  x_2(t) \\
  x_3(t) \\
  \vdots \\
\end{array}%
\right) = \\
\ \\
  \hskip-1.7cm = \mathbf{A.X}(t)
 \\
\end{array}
\end{equation}
where $$\mathbf{X}(t)=\left(%
\begin{array}{c}
  x_0(t) \\
  x_1(t) \\
  x_2(t) \\
  x_3(t) \\
  \vdots \\
\end{array}%
\right)$$ and $$\mathbf{A}= \left(%
\begin{array}{cccccc}
  0 & 1 & 0 & 0 & 0 & \cdots \\
  0 & 0 & 1 & 0 & 0 & \cdots \\
  0 & 0 & 0 & 2 & 0 & \cdots \\
  0 & 0 & 0 & 0 & 3 & \cdots \\
  \vdots & \vdots & \vdots & \vdots & \vdots & \ddots \\
\end{array}%
\right).$$

The general solution of system (\ref{3erEjemploMatricial}) has the
form
\begin{equation}\label{SOLULINEAL3}
\mathbf{X}(t)= e^{\mathbf{A}t}.\left(%
\begin{array}{c}
  c_0 \\
  c_1 \\
  c_2 \\
  c_3 \\
  \vdots \\
\end{array}%
\right),
\end{equation}
\ \\
where $\left(%
\begin{array}{c}
  c_0 \\
  c_1 \\
  c_2 \\
  c_3 \\
  \vdots \\
\end{array}%
\right) $ is the vector of arbitrary constants.

\item Finally, considering (\ref{SOLULINEAL3}),
(\ref{SERIEEXPONENCIAL}) and each one of the powers of the matrix
$\mathbf{A}$, that is

$$\mathbf{A}^2=\left(%
\begin{array}{ccccccc}
  0 & 0 & 1 & 0 & 0 & 0 &\cdots \\
  0 & 0 & 0 & 2 & 0 & 0 &\cdots \\
  0 & 0 & 0 & 0 & 6 & 0 &\cdots \\
  0 & 0 & 0 & 0 & 0 & 12 &\cdots \\
  \vdots & \vdots & \vdots & \vdots & \vdots & \vdots & \ddots \\
\end{array}%
\right), $$

$$ \mathbf{A}^3=\left(%
\begin{array}{ccccccc}
  0 & 0 & 0 & 2 & 0 & 0 & \cdots \\
  0 & 0 & 0 & 0 & 6 & 0 & \cdots \\
  0 & 0 & 0 & 3 & 0 & 24 & \cdots \\
  0 & 0 & 0 & 0 & 0 & 0 & \cdots \\
  \vdots & \vdots & \vdots & \vdots & \vdots & \vdots & \ddots \\
\end{array}%
\right), $$

and so on, the general solution of problem (\ref{3erEjemplo}) is
given by

 $$\begin{array}{c}
  y(t) = x_0(t)=\left(\text{1st.\ row\ of\ }e^{At}\right).\left(%
 \begin{array}{c}
  c_0 \\
  c_1 \\
  c_2 \\
  c_3 \\
  \vdots \\
 \end{array}%
 \right)=\hskip4.5cm \\
   = \left(\text{1st.\ row\ of\ }\left(\mathbf{I} + \ds \sum_{j=1}^{+\infty} \frac1{j!} (\mathbf{A}t)^j\right)\right).\left(%
 \begin{array}{c}
  c_0 \\
  c_1 \\
  c_2 \\
  c_3 \\
  \vdots \\
 \end{array}%
 \right)= \hskip2cm \\
   = \left(\text{1st.\ row\ of\ }\mathbf{I} + \ds \sum_{j=1}^{+\infty} \frac1{j!} \left(\text{1st.\ row\ of\ }(\mathbf{A})^j\right)t^j\right).\left(%
 \begin{array}{c}
  c_0 \\
  c_1 \\
  c_2 \\
  c_3 \\
  \vdots \\
 \end{array}%
 \right)= \\
  \ \\
 = c_0 + c_1 t + \frac1{2!}. c_2 t^2 + \frac1{3!}.2! c_3 t^3 + \dots + \frac1{j!}.(j-1)! c_j t^j + \dots = \hskip7mm \\
 \ \\
 = c_0 + c_1 t + \frac{c_2}2 t^2 + \frac{c_3}3 t^3 + \dots + \frac{c_j}j t^j + \dots \hskip3.2cm \\
 \end{array}$$

Taking into account that the initial condition is
$y_0=y(0)=x_0(0)=c_0$ and that $x_j(t) = e^{j.y(t)}$ for all
integer $j\ge 1$, it is deduced that $c_j = x_j(0) = e^{j.y(0)} =
e^{j.y_0}$ for all $j \ge 1$. Therefore the solution computed
above takes the form
\begin{equation}\label{SOLUserie3}
\begin{array}{c}
 y(t) = y_0 + e^{y_0} t + \frac1{2}e^{2.y_0} t^2 + \frac1{3}e^{3.y_0} t^3 + \dots + \frac1{j}e^{j.y_0}
t^j + \dots = \hskip1.7cm \\
\ \\
 = y_0 + (e^{y_0}.t) + \frac1{2}(e^{y_0}.t)^2 + \frac1{3}(e^{y_0}.t)^3 + \dots +
 \frac1{j}(e^{y_0}.t)^j + \dots = \\
\ \\
 = y_0-\ln\left(1-e^{y_0}.t\right) =
y_0-\ln\left[e^{y_0}.\left(e^{-y_0}-t\right)\right] = \hskip2.2cm \\
\ \\
= y_0-y_0-\ln\left(e^{-y_0}-t\right) = -
\ln\left(e^{-y_0}-t\right), \hskip3.1cm \\
\end{array}
\end{equation}
\ \\
provided that $|e^{y_0}.t| < 1$. In the open interval
$\left(-e^{-y_0}, e^{-y_0}\right)$ solutions
(\ref{SOLU3erEjemplo}) and (\ref{SOLUserie3}) are equal and the
last one (\ref{SOLUserie3}) can be extended to the interval
$\left(-\infty, e^{-y_0}\right)$, that is \ $e^{y_0}.t < 1$ \  as
in (\ref{SOLU3erEjemplo}).

\end{itemize}


\section{Transforming an initial value problem into a system of linear ordinary differential equations }

The aim of this section is to establish the relationship between
the solution of an initial value problem of kind (\ref{PHVI}) or
(\ref{PNHVI}) and the solution of a system of infinite linear
differential equations of kind (\ref{SLEDO}).

Let us begin with a necessary condition to be satisfied by the
solution $\mathbf{X}(t)$ of a system of infinite linear
differential equations (\ref{SLEDO})
 \[
 \mathbf{X}'=\mathbf{AX}(t) + \mathbf{b}(t) \\
 \]
\ \\
such that $\mathbf{X}(t)=\left(%
\begin{array}{c}
  x_0(t) \\
  x_1(t) \\
  x_2(t) \\
  x_3(t) \\
  \vdots \\
\end{array}%
\right)$ is an infinite dimensional real function vector,
 \vskip2mm
\ \\
$\mathbf{A}=\left(%
\begin{array}{ccccc}
  a_{00} & a_{01} & a_{02} & a_{03} & \cdots \\
  a_{10} & a_{11} & a_{12} & a_{13} & \cdots \\
  a_{20} & a_{21} & a_{22} & a_{23} & \cdots \\
  a_{30} & a_{31} & a_{32} & a_{33} & \cdots \\
  \vdots & \vdots & \vdots & \vdots & \ddots \\
\end{array}%
\right)$ is a real constant entries infinite \linebreak
\ \\
dimensional matrix and $\mathbf{b}(t)=\left(%
\begin{array}{c}
  g(t) \\
  0 \\
  0 \\
  0 \\
  \vdots \\
\end{array}%
\right)$ is an other infinite dimensional real function vector as
it was presented in the introduction. Let us suppose that
$a_{ij}=0$ for $0\le i\le j < +\infty$ and for $j+1< i < +\infty$,
and that $a_{j,j+1}\ne 0$, for all integer $j \ge 0$. In this way
the matrix $\mathbf{A}$ has the explicit form
 \[ \mathbf{A} =
 \left(%
\begin{array}{cccccc}
  0 & a_{01} & 0 & 0 & 0 & \cdots \\
  0 & 0 & a_{12} & 0 & 0 & \cdots \\
  0 & 0 & 0 & a_{23} & 0 & \cdots \\
  0 & 0 & 0 & 0 & a_{34} & \cdots \\
  \vdots & \vdots & \vdots & \vdots & \vdots & \ddots \\
\end{array}%
\right)
 \]
and the equations of the system are clearly given by

\begin{equation}
  \hskip-1.6cm  x_0' = a_{01}\, x_1 + g(t)\\
\end{equation}
\begin{equation}
  \hskip-2.7cm  x_1' = a_{12}\, x_2 \\
\end{equation}
\begin{equation}
 \begin{array}{c}
  \hskip-2.45cm  x_2' = a_{23}\, x_3  \\
   \hskip-2.7cm  \cdots  \\
 \end{array}
\end{equation}
\begin{equation}
 \begin{array}{c}
   x_j' = a_{j,j+1}\, x_{j+1} \quad \text{for\ } j\ge 0 \\
  \hskip-3cm   \cdots  \\
 \end{array}
\end{equation}
\ \\
Let us also assume that the function $\varphi(y)$ is real analytic
at the value $y=y_0$ and the function $g(t)$ is real analytic at
the value $t=t_0$. Therefore, there exists a unique solution
$y(t)$ in some open interval centered at the value $t_0$ for the
IVP (\ref{PNHVI}), which includes the IVP (\ref{PHVI}),
$$
\left\{
\begin{array}{rcl}
y'=\dfrac{dy}{dt}\!\! &=&\!\! \varphi(y) + g(t) \\
y(t_0)\!\! &=&\!\! y_{0},
\end{array}
\right.
$$
Extending the variation of constants formula for the finite
dimensional case ( (\cite{C-L}) or (\cite{H})), it is deduced in a
straightforward way that the solution of the system is
 \begin{equation}\label{INTEGRALSOL}
 \mathbf{X}(t) = e^{\mathbf{A}.(t-t_0)}.\mathbf{C}+
 \int_{t_0}^te^{\mathbf{A}.(t-s)}.\mathbf{b}(s)\, ds
 \end{equation}
where the
infinite dimensional vector of the arbitrary real constants
$\mathbf{C}$ coincides to the vector of the initial values
$\mathbf{X}(t_0)$, that is
\[ \mathbf{C}=
\left(%
\begin{array}{c}
  c_0 \\
  c_1 \\
  c_2 \\
  \vdots \\
\end{array}%
\right) = \mathbf{X}(t_0). \]

\begin{rem}
Minimal hypotheses on smoothness of the vector function operator
$\mathbf{T}$\footnote{It is usually known as the Picard fixed
point theorem for abstract Banach spaces; see, for instance,
(\cite{Ru}). Alternatively, it is called the Contraction Mapping
Principle on complete metric spaces; see, (\cite{M-H}) or
(\cite{Bu}). } on a Banach space of infinite dimensional vector
functions $\mathbf{X}(t)$, where $\mathbf{T}(\mathbf{X})(t) =
\mathbf{A}.\mathbf{X}(t) + \mathbf{b}(t)$ for the case in which
$\mathbf{A}$ is a constant matrix, or $\mathbf{T}(\mathbf{X})(t) =
\mathbf{A}(t)\mathbf{X}(t) + \mathbf{b}(t)$ for the case in which
$\mathbf{A}(t)$ is a function matrix, guarantee a unique local
fixed point of operator $\mathbf{T}$, and then a unique local
solution of the former system of infinite ODE with the initial
vector value $\mathbf{C}$. Besides, as is shown in (\cite{Ru}),
the exponential of an operator on a Banach algebra can be defined
by symbolic calculus, and then it is differentiable satisfying the
standard rule
\[
\dfrac{d\left(e^{\int_{t0}^t \mathbf{A}\, ds}\right)}{dt} =
\mathbf{A}.e^{\int_{t0}^t \mathbf{A}\, ds}.
\]
As a direct consequence, equality (\ref{INTEGRALSOL}) provides the
solution for the linear system (\ref{SLEDO}).
\end{rem}

To reach the goal of this article which is the resolution of
problem (\ref{PNHVI}), the following equality has to be imposed

\begin{equation}\label{y-series}
\begin{array}{rl}
  y(t)\!\!\!\! &= x_0(t)  \\
  \    &= \left( \text{1st\  row\ of\ }
e^{\mathbf{A}.(t-t_0)} \right). \left(%
\begin{array}{c}
  c_0 \\
  c_1 \\
  c_2 \\
  \vdots \\
\end{array}%
\right) + \ds\int_{t_0}^t  \left( \text{1st\  row\ of\ }
e^{\mathbf{A}.(t-s)} \right). \left(%
\begin{array}{c}
  g(s) \\
  0 \\
  0 \\
  \vdots \\
\end{array}%
\right) \, ds  \\
 \ & \ \\
  \  & = c_0 + a_{01}.c_1(t-t_0) + a_{01}.a_{12}.c_2 \dfrac{(t-t_0)^2}{2!} +
  a_{01}.a_{12}.a_{23}.c_3 \dfrac{(t-t_0)^3}{3!} + \dots + \\
  \ & \ \\
  \ &\quad + \left[\ds \prod_{k=0}^{j-1}a_{k,k+1}\right] c_j \dfrac{(t-t_0)^j}{j!} +
  \dots + \ds \int_{t_0}^t g(s)\, ds, \\
\end{array}
\end{equation}

and then by differentiating with respect to $t$ it is easy to
arrive to
 $$
\varphi(y) + g(t) = y'(t) = x_0'(t) = \ds
\sum_{j=1}^{+\infty}\left[\ds \prod_{k=0}^{j-1}a_{k,k+1}\right]
c_j \dfrac{(t-t_0)^{j-1}}{(j-1)!} + g(t),
$$
whence it results that the Taylor series for the function
composition $\varphi(y(t))$ about the value $t = t_0$ is
\begin{equation}\label{phi-series}
 \varphi(y(t)) = \ds
\sum_{h=1}^{+\infty}\left[\ds \prod_{k=0}^{h-1}a_{k,k+1}\right]
c_h \dfrac{(t-t_0)^{h-1}}{(h-1)!}.
\end{equation}

Taking into account the equations (16), (17),(18), \dots, (19) of
the linear system it is evident, for all real $t$ in an open
neighborhood of $t_0$, that
$$
x_0'(t) = a_{01}.x_1(t) + g(t),
$$
so
$$
x_1(t) = \dfrac{x_0'(t) - g(t)}{a_{01}}  =
\dfrac{1}{a_{01}}.\varphi(y(t)) = \dfrac{1}{a_{01}}.\ds
\sum_{h=1}^{+\infty}\left[\ds \prod_{k=0}^{h-1}a_{k,k+1}\right]
c_h \dfrac{(t-t_0)^{h-1}}{(h-1)!},
$$
and for all integer $j\ge 2$ that
$$
x_{j-1}'(t) = a_{j-1,j}.x_{j}(t),
$$
so
$$
\begin{array}{rl}
x_{j}(t) & = \dfrac{1}{a_{j-1,j}}.x_{j-1}'(t) = \dots =
\dfrac{1}{\ds \prod_{k=0}^{j-1}a_{k,k+1}}\ds
\sum_{h=j}^{+\infty}\left[\ds \prod_{k=0}^{h-1}a_{k,k+1}\right]
c_h \dfrac{(t-t_0)^{h-j}}{(h-j)!} =  \\
 \   & = \dfrac{1}{\ds \prod_{k=0}^{j-1}a_{k,k+1}}\ds \dfrac{d^{j}}{dt^j}\Bigl[\varphi(y(t))\Bigr]. \\
\end{array}
$$

And finally, by evaluating at $t=t_0$ it is obtained from
(\ref{y-series}) that
 \vskip2mm

\begin{equation}\label{condinicial}
   \hskip-3.1cm \bullet \ c_0 = y(t_0)= y_0,
   \end{equation}

 \vskip2mm
\ \\
and that
 \vskip2mm
 \begin{equation}\label{coefseries}
  \bullet \ \ds c_j = \dfrac{1}{\ds
 \prod_{k=0}^{j-1}a_{k,k+1}}\left. \ds
  \dfrac{d^{j}}{dt^j}\Bigl[\varphi(y(t))\Bigr]\right|_{t=t_0},
\end{equation}

\vskip2mm
\ \\
for all $j\ge 1$, in coincidence with the construction of the
coefficients of Taylor series (\ref{phi-series}) where $\left.\ds
\dfrac{d^{j}}{dt^j}\Bigl[\varphi(y(t))\Bigr]\right|_{t=t_0} =
\left[ \ds \prod_{k=0}^{j-1}a_{k,k+1}\right] . c_j.$

The previous set of infinite equalities allows to compute the
constants $c_j$ for all integer $j\ge 0$ in terms of the function
$\varphi(y)$, the derivative of $y(t) = x_0(t)$. In fact, the
following statement has been proved.

\begin{teo}
Let us suppose an infinitely derivable function $g(t)$ at $t=t_0$
and the existence of the function $y(t)$ solution of problem
(\ref{PNHVI})
\ \\
\[
\left\{
\begin{array}{rcl}
y'(t)=\dfrac{dy}{dt}\!\! &=&\!\! \varphi(y(t)) + g(t) \\
y(t_0)\!\! &=&\!\! y_{0},
\end{array}
\right.
\]
\ \\
for a function $\varphi(y(t))$ which is represented by its Taylor
series about $t = t_0$
\[
\varphi(y(t)) = \ds \sum_{j\ge 1} \left[\ds \prod_{k=0}^{j-1}
a_{k, k+1}\right] . cj.\dfrac{(t-t_0)^{j-1}}{(j-1)!}
\]
on the open interval $|t - t_0| < \delta $ defined for some real
number $\delta > 0$.

Then, the family of infinite functions given by the following
relations for $ x_0(t) = y(t)  $

 \vskip-4mm

\begin{eqnarray*}
  x_0'(t)\!\!\!\! &=&\!\!\!\! \varphi(y(t)) + g(t) = a_{01}.x_1(t), \\
  x_1'(t)\!\!\!\! &=&\!\!\!\! a_{12}.x_2(t), \\
  x_2'(t)\!\!\!\! &=&\!\!\!\! a_{23}.x_3(t), \\
  \dots\!\!\!\! &\  &\!\!\!\! \dots \\
  x_j'(t)\!\!\!\! &=&\!\!\!\! a_{j,j+1}.x_{j+1}(t), \mathrm{\quad for\ all\ integer\ } j\ge 1
\end{eqnarray*}
\vskip-4mm
\ \\
defines an infinite dimensional function vector

\[\mathbf{X}(t) = \left(\begin{array}{c}
  x_0(t) \\
  x_1(t) \\
  x_2(t) \\
  x_3(t) \\
  \vdots \\
\end{array}
\right)
\]
\ \\
which is the solution of the system of infinite linear ordinary
differential equations (\ref{SLEDO})

\[ \mathbf{X}'(t) = \mathbf{A.X}(t) + \mathbf{b}(t) \]
\ \\
with the initial vector value

\[\mathbf{X}(t_0) = \left(\begin{array}{c}
  c_0 \\
  c_1 \\
  c_2 \\
  c_3 \\
  \vdots \\
\end{array}
\right) . \]
\end{teo}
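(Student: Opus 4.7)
My plan is to define each coordinate $x_j(t)$ iteratively from the stated relations, observe that the resulting vector $\mathbf{X}(t)$ satisfies $\mathbf{X}' = \mathbf{A}\mathbf{X} + \mathbf{b}$ by construction, and then identify the initial vector $\mathbf{X}(t_0)$ with $(c_0, c_1, c_2, \ldots)^T$ by invoking the assumed Taylor expansion of $\varphi(y(t))$ about $t_0$.

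First I would set $x_0(t) := y(t)$, so that $x_0(t_0) = y_0 = c_0$ in agreement with (\ref{condinicial}). The first equation of the system, together with the ODE $y' = \varphi(y) + g(t)$, forces $a_{01}\, x_1(t) = \varphi(y(t))$, hence $x_1(t) := \varphi(y(t))/a_{01}$, which is well-defined since $a_{01}\neq 0$. For $j \ge 2$ I would define recursively $x_j(t) := x_{j-1}'(t) / a_{j-1,j}$. A straightforward induction then yields the closed form
\[
x_j(t) = \frac{1}{\prod_{k=0}^{j-1} a_{k,k+1}} \, \frac{d^{j-1}}{dt^{j-1}}\bigl[\varphi(y(t))\bigr], \qquad j\ge 1,
\]
which is meaningful on the whole interval $|t-t_0|<\delta$, because the Taylor-series hypothesis makes $\varphi(y(t))$ real analytic there, and hence so is each of its derivatives.

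By construction, $x_{j-1}'(t) = a_{j-1,j}\, x_j(t)$ for every $j\ge 1$, while $x_0'(t) = \varphi(y(t)) + g(t) = a_{01}\, x_1(t) + g(t)$ from the ODE itself. These identities are exactly the scalar rows of $\mathbf{X}'(t) = \mathbf{A}\mathbf{X}(t) + \mathbf{b}(t)$, so the vector $\mathbf{X}(t) = (x_0(t), x_1(t), x_2(t), \ldots)^{T}$ solves the linear system.

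It remains to compute $\mathbf{X}(t_0)$ and show that it equals $(c_0, c_1, c_2, \ldots)^{T}$. Differentiating the assumed series
\[
\varphi(y(t)) = \sum_{h\ge 1} \left[\prod_{k=0}^{h-1} a_{k,k+1}\right] c_h \, \frac{(t-t_0)^{h-1}}{(h-1)!}
\]
term by term $j-1$ times and evaluating at $t_0$ kills all summands except $h=j$, giving $\left.\dfrac{d^{j-1}}{dt^{j-1}}[\varphi(y(t))]\right|_{t=t_0} = \bigl[\prod_{k=0}^{j-1} a_{k,k+1}\bigr] c_j$. Substituting into the closed form above cancels the product and produces $x_j(t_0) = c_j$ for every $j\ge 1$, completing the identification. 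The only substantive point, and hence the main place to be careful, is justifying that the infinite recursion $x_j = x_{j-1}'/a_{j-1,j}$ produces functions differentiable on a common neighborhood of $t_0$; this is precisely where the analyticity assumption is essential, since it guarantees that $\varphi\circ y$, together with all its derivatives, is well-defined on $|t-t_0|<\delta$ and that term-by-term differentiation of the series representing $\varphi(y(t))$ is legitimate.
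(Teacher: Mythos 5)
Your proposal is correct and follows essentially the same route as the paper: you construct $x_j$ recursively from the system's rows, arrive at the same closed form $x_j = \bigl[\prod_{k=0}^{j-1} a_{k,k+1}\bigr]^{-1}\frac{d^{j-1}}{dt^{j-1}}[\varphi(y(t))]$ that the paper derives just before the theorem, and identify $x_j(t_0)=c_j$ by evaluating the term-by-term differentiated Taylor series at $t_0$, exactly as in the paper's formulas (\ref{condinicial}) and (\ref{coefseries}). Your write-up is in fact more explicit than the paper's short proof (and quietly corrects the paper's off-by-one in the order of the derivative, which there appears as $d^j/dt^j$), and your closing remark about where analyticity is actually needed addresses the one point the paper leaves implicit.
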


\begin{proof}
The assumptions about functions $g(t)$ and $\varphi(y(t))$ allows
to establish that the (unique) solution $y(t)$ of the initial
value problem (\ref{PNHVI}) on an open interval centered at the
value $t=t_0$ is infinitely differentiable there thanks to the
application of the chain rule. In this way the existence of the
constants $c_j$, for all integer $j\ge 0$, is guaranteed according
to the family of formulas (\ref{condinicial}) and
(\ref{coefseries}) preceding this theorem. Taking into account
these computations, it is straightforward to conclude that the
function vector $\mathbf{X}(t)$ is the solution of system
(\ref{SLEDO}) satisfying the initial conditions $ \mathbf{X}(t_0)
= \left(\begin{array}{c}
  c_0 \\
  c_1 \\
  c_2 \\
  c_3 \\
  \vdots \\
\end{array}
\right), $ as it was the purpose of this result.

\end{proof}

\begin{teo}
Conversely, let $\mathbf{C} = \left(\begin{array}{c}
  c_0 \\
  c_1 \\
  c_2 \\
  c_3 \\
  \vdots \\
\end{array}
\right) $ be an infinite dimensional real vector and if

\begin{enumerate}
    \item the function $g(t)$ is infinitely differentiable on $t = t_0$,
    \item the function $\varphi(y)$ is infinitely differentiable for all real
        $y$ such that $|y-c_0|< \beta$ for some real number $\beta > 0$,
    \item the power series
     \[
        \ds \sum_{j\ge 1} \left[\ds \prod_{k=0}^{j-1} a_{k, k+1}\right] .
        cj.\dfrac{(t-t_0)^{j-1}}{(j-1)!}
     \]
    converges on the open interval $|t - t_0| < \alpha $ for some
    real number $\alpha > 0$, and
    \item if, besides, system (\ref{SLEDO})
        \[
        \mathbf{X}'(t) = \mathbf{A.X}(t) + \mathbf{b}(t)
        \]
        \ \\
        with initial vector value $\mathbf{X}(t_0) = \mathbf{C} $ admits as solution
        the infinite dimensional function vector
        \[
        \mathbf{X}(t) = \left(\begin{array}{c}
            x_0(t) \\
            x_1(t) \\
            x_2(t) \\
            x_3(t) \\
            \vdots \\
                \end{array}
            \right),
        \]
\end{enumerate}
\ \\
then, its first component $y(t) = x_0(t)$ is the solution of the
initial value problem (\ref{PNHVI}) on an appropriated open
interval $|t-t_0|<\delta $ for some real number $\delta > 0$,
where $y_0 = c_0$.

\end{teo}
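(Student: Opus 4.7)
The plan is to split the conclusion into the initial condition and the ODE identity, then use the first row of the system to reduce the latter to the single functional identity $a_{01}\,x_1(t) = \varphi(y(t))$ on a small interval around $t_0$. The initial condition is immediate: by hypothesis 4, $\mathbf{X}(t_0)=\mathbf{C}$, hence $y(t_0)=x_0(t_0)=c_0=y_0$. The first equation of the system is $x_0'(t) = a_{01}\,x_1(t) + g(t)$, so once the displayed identity is proved, the ODE $y'(t) = \varphi(y(t)) + g(t)$ follows by substitution.

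First I would exhibit a convergent power-series representation of $a_{01}\,x_1(t)$. Applying the variation-of-constants formula (\ref{INTEGRALSOL}) to the first component of $\mathbf{X}$ and differentiating once, exactly as already carried out in (\ref{y-series}), yields
\[
a_{01}\,x_1(t) \;=\; x_0'(t) - g(t) \;=\; \sum_{j=1}^{+\infty} \Bigl[\prod_{k=0}^{j-1} a_{k,k+1}\Bigr] c_j \, \frac{(t-t_0)^{j-1}}{(j-1)!},
\]
which by hypothesis 3 converges, and therefore is real analytic, on $|t-t_0|<\alpha$. Next I would identify this series with $\varphi(y(t))$. Continuity of $x_0$ at $t_0$ combined with hypothesis 2 produces some $\delta_1>0$ such that $|y(t)-c_0|<\beta$ and $\varphi(y(t))$ is $C^\infty$ on $|t-t_0|<\delta_1$. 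The coefficients $[\prod_{k=0}^{j-1} a_{k,k+1}] c_j /(j-1)!$ are precisely the Taylor coefficients of $\varphi(y(t))$ at $t=t_0$ delivered by the chain-rule calculation that produced (\ref{condinicial})--(\ref{coefseries}) just before the statement of Theorem 1. Convergence of the series then upgrades this formal coefficient-level matching to the genuine functional identity $a_{01}\,x_1(t) = \varphi(y(t))$ on $|t-t_0|<\delta := \min(\alpha,\delta_1)$, and substituting into the first equation of the system completes the proof.

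The main obstacle is this last promotion from Taylor-coefficient agreement at a single point to pointwise agreement on an interval. The left-hand side is already analytic by hypothesis 3, but the right-hand side is only $C^\infty$ a priori, so the argument must use that a $C^\infty$ function whose Taylor series at $t_0$ is absolutely convergent and shares all Taylor coefficients with a given analytic function is itself analytic there and coincides with that function on the common interval of convergence. This analytic-continuation step, together with the careful choice $\delta=\min(\alpha,\delta_1)$, is the delicate point of the proof and is where hypotheses 2 and 3 must be used jointly rather than separately. A secondary verification is that the system solution supplied by hypothesis 4 is indeed unique on the relevant interval, which is provided by Remark 1 via the Contraction Mapping Principle applied to $\mathbf{T}(\mathbf{X})(t) = \mathbf{A}\mathbf{X}(t) + \mathbf{b}(t)$.
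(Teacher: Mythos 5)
Your proposal follows essentially the same route as the paper's own proof: apply the variation-of-constants formula to extract the power-series representation of the first component, read off $y(t_0)=c_0$, differentiate term by term, and identify the resulting series with $\varphi(y(t))$ by matching Taylor coefficients via the chain-rule identities (\ref{coefseries}). Your explicit flagging of the delicate step --- promoting coefficient agreement at $t_0$ to a functional identity on an interval, which requires the analyticity supplied by hypothesis 3 rather than mere smoothness of $\varphi(y(t))$ --- is in fact more careful than the paper, which passes over this point with the phrase that the series ``becomes the Taylor series of $\varphi(y(t))$.''
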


\begin{proof}
In order to verify this theorem, let us define $y(t) = x_0(t)$,
the first component of the vector $\mathbf{X}(t)$ solution of
system (\ref{SLEDO})
\[
\mathbf{X}'(t) = \mathbf{A.X}(t) + \mathbf{b}(t)
\]
\ \\
with initial vector value $\mathbf{X}(t_0) =
\left(\begin{array}{c}
  c_0 \\
  c_1 \\
  c_2 \\
  c_3 \\
  \vdots \\
\end{array}
\right) := \mathbf{C}. $

As it was mentioned above, thanks to the variation of constants
method, the solution of this system has the form
\[ \mathbf{X}(t) = e^{\mathbf{A}.(t-t_0)}.\mathbf{C} + \ds \int_{t_0}^t
e^{\mathbf{A}.(t-s)}.b(s)\, ds,\] since $\mathbf{A}$ is a constant
matrix and then it commutes with the exponential one
$e^{\mathbf{A}t}$.
\ \\
Therefore, the first component $y(t) = x_0(t)$ of vector
$\mathbf{X}(t)$, is given by the first row rule; that is,
\[
\begin{array}{rl}
  y(t)\!\!\!\! &= x_0(t)  \\
  \    &= \left( \text{1st\  row\ of\ }
e^{\mathbf{A}.(t-t_0)} \right). \left(%
\begin{array}{c}
  c_0 \\
  c_1 \\
  c_2 \\
  \vdots \\
\end{array}%
\right) + \ds\int_{t_0}^t  \left( \text{1st\  row\ of\ }
e^{\mathbf{A}.(t-s)} \right). \left(%
\begin{array}{c}
  g(s) \\
  0 \\
  0 \\
  \vdots \\
\end{array}%
\right) \, ds  \\
 \ & \ \\
  \  & = c_0 + a_{01}.c_1(t-t_0) + a_{01}.a_{12}.c_2 \dfrac{(t-t_0)^2}{2!} +
  a_{01}.a_{12}.a_{23}.c_3 \dfrac{(t-t_0)^3}{3!} + \dots + \\
  \ & \ \\
  \ &\quad + \left[\ds \prod_{k=0}^{j-1}a_{k,k+1}\right] c_j \dfrac{(t-t_0)^j}{j!}
  + \dots + \ds \int_{t_0}^t g(s)\, ds. \\
\end{array}
\]
\ \\
Or more briefly
\[
y(t) = c_0 + \ds \sum_{j=1}^{+\infty}\ds
\left[\prod_{k=0}^{j-1}a_{k,k+1}\right] c_j \dfrac{(t-t_0)^j}{j!}
  + \ds \int_{t_0}^t g(s)\, ds,
  \]
\ \\
which has sense for all real $t$ in the interval of convergence
$|t-t_0|< \alpha $ of the last series, for some real number
$\alpha
> 0$.

First, by evaluating on $t_0$ it is clear that $y(t_0) = c_0 =
y_0$.

Next, by differentiating term to term the series with respect to
the variable $t$ in the convergence interval $|t-t_0|< \alpha$ and
applying the fundamental calculus theorem it is evident that
\[
\begin{array}{rl}
y'(t) &= \ds \sum_{j=1}^{+\infty}\ds
\left[\prod_{k=0}^{j-1}a_{k,k+1}\right] c_j
\dfrac{(t-t_0)^{j-1}}{(j-1)!} + g(t). \\
\end{array}
\]

By hypothesis, function $g(t)$ is infinitely differentiable on $t
= t_0$ and also the last power series on the open interval $|t -
t_0|< \alpha $. Thus, the function $y(t)$ is infinitely
differentiable on $t=t_0$ as well, so by applying the chain rule
indefinitely to the function composition $\varphi(y(t))$ on $t_0$
the identities (\ref{coefseries}) are rapidly recovered. As a
consequence the series
\[
\ds \sum_{j=1}^{+\infty}\ds
\left[\prod_{k=0}^{j-1}a_{k,k+1}\right] c_j
\dfrac{(t-t_0)^{j-1}}{(j-1)!}
\]
becomes the Taylor series of the function $\varphi(y(t))$ on an
appropriated interval $|t-t_0|<\delta $, for some real number
$\delta$ depending on $t_0$, on $\alpha$, on $\beta$ and on the
values of $g(t)$ on a neighborhood of $t_0$. This fact shows that
$y'(t) = \varphi(y(t)) + g(t)$ proving that $y(t)$ is the solution
of the initial value problem (\ref{PNHVI}), as desired.

\end{proof}

\begin{rem}
According to a particular situation, the domain of the definition
of the solution concerning the power series could be extended to a
larger interval. In fact, this was observed in the two previous
examples.
\end{rem}

\subsection{ An special extension of the method}

Let us now continue with the IVP
 \[
\left\{
\begin{array}{rcl}
y'(t)=\dfrac{dy}{dt}\!\! &=&\!\! \Phi(y(t), t) = \varphi(y(t)).f(t) + g(t) \\
y(t_0)\!\! &=&\!\! y_{0},
\end{array}
\right.
\]
 which its corresponding system of infinite linear
differential equations is supposed to be
\begin{equation}\label{SNLEDO}
 \mathbf{X}'=\mathbf{A}(t).\mathbf{X}(t) + \mathbf{b}(t) \\
\end{equation}
\ \\
such that as before $\mathbf{X}(t)=\left(%
\begin{array}{c}
  x_0(t) \\
  x_1(t) \\
  x_2(t) \\
  x_3(t) \\
  \vdots \\
\end{array}%
\right)$ is an infinite dimensional real function vector,
 \vskip2mm

\begin{equation}\label{MatrixAdet}
\mathbf{A}(t)=\left(%
\begin{array}{ccccc}
  0 & a_{01}f(t) & 0 & 0 & \cdots \\
  0 & 0 & a_{12}f(t) & 0 & \cdots \\
  0 & 0 & 0 & a_{23}f(t) & \cdots \\
  0 & 0 & 0 & 0 & \cdots \\
  \vdots & \vdots & \vdots & \vdots & \ddots \\
\end{array}%
\right) = A.f(t)
\end{equation}
 \vskip2mm
\ \\
is an infinite dimensional matrix with real function entries,
where the matrix
 \vskip2mm
 \ \\
$\mathbf{A} = \left(%
\begin{array}{ccccc}
  0 & a_{01} & 0 & 0 & \cdots \\
  0 & 0 & a_{12} & 0 & \cdots \\
  0 & 0 & 0 & a_{23} & \cdots \\
  0 & 0 & 0 & 0 & \cdots \\
  \vdots & \vdots & \vdots & \vdots & \ddots \\
\end{array}%
\right)$ has constant entries and
$\mathbf{b}(t)=\left(%
\begin{array}{c}
  g(t) \\
  0 \\
  0 \\
  0 \\
  \vdots \\
\end{array}%
\right)$
 \vskip2mm
 \ \\
is an other infinite dimensional real function vector. Thus, the
system is explicitly given by

\begin{equation}
  \hskip-1.6cm  x_0' = a_{01}.f(t)\, x_1 + g(t)\\
\end{equation}
\begin{equation}
  \hskip-2.7cm  x_1' = a_{12}.f(t)\, x_2 \\
\end{equation}
\begin{equation}
 \begin{array}{c}
  \hskip-2.7cm  x_2' = a_{23}.f(t)\, x_3  \\
   \hskip-3cm  \cdots  \\
 \end{array}
\end{equation}
\begin{equation}
 \begin{array}{c}
   x_j' = a_{j,j+1}.f(t)\, x_{j+1} \quad \text{for\ } j\ge 1 \\
  \hskip-3cm   \cdots  \\
 \end{array}
\end{equation}
\ \\ Since the matrix $\mathbf{A}(t) = \mathbf{A}.f(t)$ and the
integral  $\ds \int_{t_0}^t \mathbf{A}(s)\, ds = \mathbf{A}.\ds
\int_{t_0}^t f(s)\, ds$ commute, then $A(t)$ and the exponential
matrix $\ds {e^{ \int_{t_0}^t\mathbf{A}(s)\, ds}}$
$=e^{\mathbf{A}. \int_{t_0}^t f(s)\, ds}$ commute as well, and
thanks to the variation of constants formula the solution of the
previous linear system can be expressed as
\begin{equation}\label{SOLGRAL}
\begin{array}{cc}
\mathbf{X}(t) & = \ds {e^{ \int_{t_0}^t \mathbf{A}(s)\,
ds}.\mathbf{X}(t_0) + \ds \int_{t_0}^t {e^{ \int_r^t
\mathbf{A}(s)\, ds} .\mathbf{b}(r)\, dr}}= \\
 & = \ds {e^{\mathbf{A}. \int_{t_0}^t f(s)\, ds}.\mathbf{X}(t_0) +
\ds \int_{t_0}^t {e^{\mathbf{A}. \int_r^t f(s) \, ds}.\mathbf{b}(r)\, dr} }. \\
\end{array}
\end{equation}
 See, for instance, (\cite{C-L}) or (\cite{H}).

Thus, under little adaptations to this case, it is not difficult
to deduced the corresponding extensions of the previous two
theorems.

\begin{teo}
Let us suppose an infinitely derivable function $g(t)$ at $t=t_0$,
the existence of the function $y(t)$ solution of problem
\ \\
\begin{equation}\label{ExtPNHVI}
\left\{
\begin{array}{rcl}
y'(t)=\dfrac{dy}{dt}\!\! &=&\!\! \Phi(y(t), t) = \varphi(y(t)).f(t) + g(t) \\
y(t_0)\!\! &=&\!\! y_{0},
\end{array}
\right.
\end{equation}
\ \\
for a function $\varphi(y(t))$which is represented by its Taylor
series about $t = t_0$
\[
\varphi(y(t)) = \ds \sum_{j\ge 1} \left[\ds \prod_{k=0}^{j-1}
a_{k, k+1}\right] . cj.\dfrac{(t-t_0)^{j-1}}{(j-1)!}
\]
on an open interval $|t - t_0| < \delta $ defined for some real
number $\delta > 0$, and an integrable function $f(t)$ on the same
interval.

Then, the family of infinite functions defined by the following
relations

\[ x_0(t)=y(t)\]

\[ x_0'(t) = \varphi(y(t)).f(t) + g(t) = a_{01}.f(t)\, x_1(t), \]

\[ x_1'(t) = a_{12}.f(t)\, x_2(t), \]

\[ x_2'(t) = a_{23}.f(t)\, x_3(t), \]

\[ \dots \qquad \dots \]

\[ x_j'(t) = a_{j,j+1}.f(t)\, x_{j+1}(t), \mathrm{\quad for\ all\ integer\ } j\ge 1
\]

\vskip2mm
\ \\
defines an infinite dimensional function vector

\[\mathbf{X}(t) = \left(\begin{array}{c}
  x_0(t) \\
  x_1(t) \\
  x_2(t) \\
  x_3(t) \\
  \vdots \\
\end{array}
\right)
\]
\ \\
which is the local solution of the system of infinite linear
ordinary differential equations (\ref{SNLEDO})

\[ \mathbf{X}'(t) = \mathbf{A}(t).\mathbf{X}(t) + \mathbf{b}(t) \]
\ \\
with the initial vector value

\[\mathbf{X}(t_0) = \left(\begin{array}{c}
  c_0 \\
  c_1 \\
  c_2 \\
  c_3 \\
  \vdots \\
\end{array}
\right) , \]
\end{teo}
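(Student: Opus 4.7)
The plan is to adapt the proof of the earlier Theorem to accommodate the $t$-dependent matrix $\mathbf{A}(t)=\mathbf{A}\cdot f(t)$ arising from the factor $f(t)$ multiplying $\varphi(y(t))$. First I would invoke the hypotheses to guarantee sufficient regularity: since $\varphi(y(t))$ is given by a convergent Taylor series at $t_0$, $g(t)$ is $C^{\infty}$ at $t_0$, and $y(t)$ solves the IVP (\ref{ExtPNHVI}), repeated application of the chain rule to $y'(t)=\varphi(y(t))\,f(t)+g(t)$ shows that $y(t)$ is itself infinitely differentiable at $t_0$, so each composition $\varphi^{(k)}(y(t))$ and each $x_j$ built from derivatives of $y$ makes sense in a neighborhood of $t_0$.

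Next I would construct the family $\{x_j(t)\}_{j\ge 0}$ by following the recursion prescribed in the theorem. Setting $x_0(t)=y(t)$, the first identity $x_0'(t)=\varphi(y(t))\,f(t)+g(t)=a_{01}\,f(t)\,x_1(t)$ forces $x_1(t)=\varphi(y(t))/a_{01}$; note that the factor $f(t)$ cancels algebraically, so $x_1$ is well-defined and smooth even if $f(t)$ has zeros. For $j\ge 2$ the identity $x_{j-1}'(t)=a_{j-1,j}\,f(t)\,x_j(t)$ defines $x_j(t)$ inductively; since $x_{j-1}$ is a polynomial in the derivatives of $\varphi(y(t))$, and each such derivative carries a factor of $f(t)$ through $y'(t)$, the quotient remains finite locally about $t_0$ provided $a_{j-1,j}\neq 0$.

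With the $x_j$ in hand I would verify the initial values $x_j(t_0)=c_j$ by comparing Taylor coefficients. Evaluating at $t_0$ yields $x_0(t_0)=y_0=c_0$, and the recursive definition evaluated at $t_0$ reproduces precisely the constants $c_j$ appearing in the hypothesized Taylor expansion
\[
\varphi(y(t))=\sum_{j\ge 1}\left[\prod_{k=0}^{j-1}a_{k,k+1}\right]c_j\,\frac{(t-t_0)^{j-1}}{(j-1)!},
\]
so $x_j(t_0)=c_j$ for every $j\ge 1$. Once the $x_j$'s are constructed and the initial conditions verified, the system $\mathbf{X}'(t)=\mathbf{A}(t)\mathbf{X}(t)+\mathbf{b}(t)$ is satisfied tautologically, because each row of the system was used as the defining relation for the corresponding $x_{j+1}$.

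The main obstacle I anticipate is the well-definedness of the $x_j$ for $j\ge 2$ in cases where $f(t)$ vanishes near $t_0$; unlike the autonomous case treated previously, here a division by $f(t)$ is hidden inside the recursion. A clean way to bypass this difficulty is the time change $\tau=\int_{t_0}^{t} f(s)\,ds$, which (whenever $f$ keeps a constant sign near $t_0$) reduces the problem to an autonomous system to which the earlier Theorem applies directly, the $x_j$'s in the original variable being recovered by pull-back. Alternatively, one explicitly imposes $f(t_0)\neq 0$, guaranteeing the recursion is valid on a small open interval centered at $t_0$; this is the most natural hypothesis to add, and under it the rest of the argument proceeds exactly as in the proof of the autonomous theorem.
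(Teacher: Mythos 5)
Your outline coincides with what the paper actually does: the paper gives no separate proof of this extension, saying only that ``under little adaptations'' the argument for the constant-coefficient case carries over, and that argument is precisely your first three steps (infinite differentiability of $y$ at $t_0$ by repeated application of the chain rule, recursive construction of the $x_j$ from the displayed relations, identification of the initial values $x_j(t_0)$ with the Taylor coefficients $c_j$ of $\varphi(y(t))$, after which the system holds by construction). What you add --- the observation that for $j\ge 2$ the recursion $x_j = x_{j-1}'/(a_{j-1,j}\,f(t))$ hides a division by $f(t)$ --- is a genuine point the paper never addresses. But be careful with your two remedies. The added hypothesis $f(t_0)\neq 0$ would rule out the paper's own illustration, $y'=y^2 t$ with $t_0=0$ and $f(t)=t$; there the difficulty disappears because $g\equiv 0$ lets one define $x_j=y^{j+1}$ directly as a function of $y$, so the factor $f(t)$ comes out of the chain rule via $y'=\varphi(y)f(t)$ rather than by division. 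For the same reason your time change $\tau=\int_{t_0}^t f(s)\,ds$ is not a valid change of variable in that example, since $f$ changes sign at $t_0$. Moreover, your claim that for $j\ge 2$ ``each such derivative carries a factor of $f(t)$ through $y'(t)$'' is only true when $g\equiv 0$: with $g\not\equiv 0$ one has $y'=\varphi(y)f+g$, so $x_1'=\varphi'(y)\bigl(\varphi(y)f+g\bigr)/a_{01}$ contains a term not divisible by $f$, and the division is genuinely needed. The honest version of the theorem therefore either assumes $g\equiv 0$ (and builds the $x_j$ as functions of $y$ alone, as in the paper's examples) or assumes $f(t_0)\neq 0$ (and uses your recursion on a small interval); your proposal is essentially right once you state explicitly which of these regimes you are in.
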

and where $\mathbf{A}(t)$ is given by (\ref{MatrixAdet}).

\begin{teo}
Conversely, let $\mathbf{C} = \left(\begin{array}{c}
  c_0 \\
  c_1 \\
  c_2 \\
  c_3 \\
  \vdots \\
\end{array}
\right) $ be an infinite dimensional real vector and if

\begin{enumerate}
    \item the function $g(t)$ is infinitely differentiable on $t = t_0$,
    \item the function $\varphi(y)$ is infinitely differentiable for all real
        $y$ such that $|y-c_0|< \beta$ for some real number $\beta > 0$,
    \item the power series
     \[
        \ds \sum_{j\ge 1} \left[\ds \prod_{k=0}^{j-1} a_{k, k+1}\right] .
        cj.\dfrac{(t-t_0)^{j-1}}{(j-1)!}
     \]
    converges on the open interval $|t - t_0| < \alpha $ for some
    real number $\alpha > 0$, and
    \item if, besides, system (\ref{SLEDO})
        \[
        \mathbf{X}'(t) = \mathbf{A.X}(t) + \mathbf{b}(t)
        \]
        \ \\
        with initial vector value $\mathbf{X}(t_0) = \mathbf{C} $ admits as solution
        the infinite dimensional function vector
        \[
        \mathbf{X}(t) = \left(\begin{array}{c}
            x_0(t) \\
            x_1(t) \\
            x_2(t) \\
            x_3(t) \\
            \vdots \\
                \end{array}
            \right),
        \]
\end{enumerate}
\ \\
then, its first component $y(t) = x_0(t)$ is the solution of the
initial value problem (\ref{PNHVI}) on an appropriated open
interval $|t-t_0|<\delta $ for some real number $\delta > 0$,
where $y_0 = c_0$.

\end{teo}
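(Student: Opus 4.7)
The plan is to define $y(t) := x_0(t)$ as the first component of the solution vector $\mathbf{X}(t)$, derive an explicit series representation for $y$ from the variation-of-constants formula, differentiate termwise on the convergence interval, and then identify the resulting series with $\varphi(y(t))$. Throughout I would mirror the constant-coefficient computation developed earlier in this section, leaning on the coefficient identities (\ref{condinicial}) and (\ref{coefseries}).

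Because $\mathbf{A}$ is a constant matrix, the representation (\ref{INTEGRALSOL}) applies and gives
\[
\mathbf{X}(t) = e^{\mathbf{A}(t-t_0)}\,\mathbf{C} + \int_{t_0}^t e^{\mathbf{A}(t-s)}\,\mathbf{b}(s)\,ds.
\]
The bidiagonal structure of $\mathbf{A}$ makes the first row of $e^{\mathbf{A}\tau}$ consist of the entries $\prod_{k=0}^{j-1}a_{k,k+1}\,\tau^{j}/j!$ (empty product equal to $1$), exactly as displayed in (\ref{y-series}); and since $\mathbf{b}(s) = (g(s),0,0,\dots)^{t}$, only the $(0,0)$-entry of $e^{\mathbf{A}(t-s)}$ survives in the integral, which equals $1$. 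Extracting the first component therefore yields
\[
y(t) = c_0 + \sum_{j=1}^{+\infty}\Bigl[\prod_{k=0}^{j-1}a_{k,k+1}\Bigr] c_j \frac{(t-t_0)^{j}}{j!} + \int_{t_0}^t g(s)\,ds,
\]
which is well-defined on $|t-t_0|<\alpha$ by hypothesis~(3). Evaluating at $t_0$ immediately gives $y(t_0) = c_0 = y_0$. Since the series in hypothesis~(3) converges on $|t-t_0|<\alpha$, it may be differentiated termwise there; combining with the fundamental theorem of calculus produces
\[
y'(t) = \sum_{j=1}^{+\infty}\Bigl[\prod_{k=0}^{j-1}a_{k,k+1}\Bigr] c_j \frac{(t-t_0)^{j-1}}{(j-1)!} + g(t),
\]
and iterating shows that $y$ is $C^{\infty}$ on that same interval.

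The decisive step is to identify the derivative series with $\varphi(y(t))$ on a sufficiently small interval $|t-t_0|<\delta$. By hypothesis~(2), $\varphi$ is $C^{\infty}$ on $|y-c_0|<\beta$; the continuity of $y$ at $t_0$ together with $y(t_0)=c_0$ furnishes $\delta\in(0,\alpha)$ for which $|y(t)-c_0|<\beta$ whenever $|t-t_0|<\delta$, so the composition $\varphi(y(t))$ is $C^{\infty}$ on that smaller interval by the chain rule. Applying the chain rule iteratively at $t_0$, together with the auxiliary identities $x_j(t) = \tfrac{1}{\prod_{k=0}^{j-1}a_{k,k+1}}\,\tfrac{d^{j}}{dt^{j}}\bigl[\varphi(y(t))\bigr]$ recovered from the equations of the system exactly as in the derivation of (\ref{coefseries}), one obtains $\prod_{k=0}^{j-1}a_{k,k+1}\,c_j = \bigl.\tfrac{d^{j}}{dt^{j}}[\varphi(y(t))]\bigr|_{t=t_0}$ for every $j\ge 1$. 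This matches the coefficients of the derivative series with those of the Taylor expansion of $\varphi\circ y$ about $t_0$, so the two coincide on $|t-t_0|<\delta$. Substituting yields $y'(t) = \varphi(y(t)) + g(t)$, which combined with $y(t_0)=y_0$ is precisely the IVP (\ref{PNHVI}).

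The main obstacle is the coefficient identification in the last step: without an a priori link between the free data $\mathbf{C}$ and $\varphi$, one must extract the required consistency from the system itself. I would handle this by induction on $j$, at each stage using $x_{j-1}'=a_{j-1,j}x_{j}$ (read from the explicit form of $\mathbf{A}$) to pass from the $(j-1)$-th derivative of $\varphi\circ y$ to the $j$-th, reproducing the chain of computations that preceded (\ref{coefseries}). A secondary but routine concern is the shrinking from $\alpha$ to $\delta$, which requires only the continuity of $y$ and is dominated by the neighborhood $|y-c_0|<\beta$ of hypothesis~(2).
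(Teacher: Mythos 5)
Your proposal follows essentially the same route as the paper's own proof: variation of constants for the constant bidiagonal matrix, extraction of the first component as the power series $c_0 + \sum_j \bigl[\prod_{k=0}^{j-1}a_{k,k+1}\bigr]c_j\frac{(t-t_0)^j}{j!} + \int_{t_0}^t g(s)\,ds$, termwise differentiation on $|t-t_0|<\alpha$, and identification of the differentiated series with the Taylor series of $\varphi(y(t))$ via the chain rule and the identities (\ref{coefseries}) read off from the equations $x_{j-1}'=a_{j-1,j}x_j$. Your treatment of the shrinkage from $\alpha$ to $\delta$ via continuity of $y$ and hypothesis (2) is, if anything, slightly more explicit than the paper's.
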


In order to illustrate this extension, let us present the
following example.

\begin{ej}
\begin{equation}\label{4toEjemplo}
\left\{
\begin{array}{rcl}
y'=\dfrac{dy}{dt}\!\! &=&\!\! y^2.t \\
y(0)\!\! &=&\!\! y_{0},
\end{array}
\right.
\end{equation}
\end{ej}

\begin{itemize}
\item The general solution is easily computed by means of the
separation of variables method. If the initial value $y_0 \neq 0$
 the solution is a nontrivial one; then, the exact solution of problem (\ref{4toEjemplo})
 is given by
\begin{equation}\label{SOLU4toEjemplo}
y(t)=\dfrac{2y_0}{2-y_0.t^2}.
\end{equation}
Its domain is given by the condition
$$y_0.t^2 \neq 2$$

For $y_0=0$ the solution obtained is the trivial one $y(t) = 0$,
for all $t$, which can be included in the previous formula.

\item Let us pass to the transformed of problem (\ref{4toEjemplo})
into a system of infinite linear ordinary equations.

Let us define
$$x_0(t)=y(t)$$
with $y_0 = x_0(0) = c_0$. Then,
$$x_0'(t)=y'(t)=y^2(t).t = t.x_1(t),$$
where $$x_1(t)= y^2(t).$$
So
$$x_1'(t)=2.y(t).y'(t) = 2t.y^3(t) = 2t.x_2(t),$$
where $$x_2(t)= y^3(t).$$
Then
$$x_2'(t)=3.y^2(t).y'(t) = 3t.y^4(t) = 3t.x_3(t),$$ and so on.
In this way the general expression is
$$x_j(t) = y^{j+1}(t) = x_0^{j+1}(t),$$ and
$$x_j'(t)=(j+1).y^j(t).y'(t)=(j+1).t.y^{j+2}(t) = (j+1).t.x_{j+1}(t),$$
for all integer $j\ge 0$.

Thus, the system is expressed in its matrix form as
\begin{equation}\label{4toEjemploMatricial}
\begin{array}{c}
  \mathbf{X}'(t)=\left(%
\begin{array}{c}
  t.x_1(t) \\
  2t.x_2(t) \\
  3t.x_3(t) \\
  4t.x_4(t) \\
  \vdots \\
\end{array}%
\right) = \underbrace{\left(%
\begin{array}{cccccc}
  0 & t & 0 & 0 & 0 & \cdots \\
  0 & 0 & 2t & 0 & 0 & \cdots \\
  0 & 0 & 0 & 3t & 0 & \cdots \\
  0 & 0 & 0 & 0 & 4t & \cdots \\
  \vdots & \vdots & \vdots & \vdots & \vdots & \ddots \\
\end{array}%
\right)}_{=\ \mathbf{A}(t) =\ \mathbf{A}.t}\left(%
\begin{array}{c}
  x_0(t) \\
  x_1(t) \\
  x_2(t) \\
  x_3(t) \\
  \vdots \\
\end{array}%
\right) = \\
\ \\
  \hskip-1.7cm = \mathbf{A}(t).\mathbf{X}(t)
 \\
\end{array}
\end{equation}
where $$\mathbf{X}(t)=\left(%
\begin{array}{c}
  x_0(t) \\
  x_1(t) \\
  x_2(t) \\
  x_3(t) \\
  \vdots \\
\end{array}%
\right)$$ and $$\mathbf{A}= \left(%
\begin{array}{cccccc}
  0 & 1 & 0 & 0 & 0 & \cdots \\
  0 & 0 & 2 & 0 & 0 & \cdots \\
  0 & 0 & 0 & 3 & 0 & \cdots \\
  0 & 0 & 0 & 0 & 4 & \cdots \\
  \vdots & \vdots & \vdots & \vdots & \vdots & \ddots \\
\end{array}%
\right).$$

The general solution of system (\ref{4toEjemploMatricial}) has the
form
\begin{equation}\label{SOLULINEAL4}
\mathbf{X}(t)= {\text{\Large e}}^{ \int_0^t\mathbf{A}.s\, ds}.\left(%
\begin{array}{c}
  c_0 \\
  c_1 \\
  c_2 \\
  c_3 \\
  \vdots \\
\end{array}%
\right),
\end{equation}
\ \\
where $\left(%
\begin{array}{c}
  c_0 \\
  c_1 \\
  c_2 \\
  c_3 \\
  \vdots \\
\end{array}%
\right) $ is the vector of arbitrary constants.

\vskip4mm

\item Finally, taking into account (\ref{SOLULINEAL4}),
(\ref{SERIEEXPONENCIAL}), the matrix $\ds \int_0^t\mathbf{A}.s\,
ds = \mathbf{A}.\ds \int_0^t s\, ds= \mathbf{A}.\dfrac{t^2}{2}$
and each one of the powers of the matrix $\mathbf{A}$, that is
 $$\mathbf{A}^2=\left(%
\begin{array}{ccccccc}
  0 & 0 & 2 & 0 & 0 & 0 &\cdots \\
  0 & 0 & 0 & 6 & 0 & 0 &\cdots \\
  0 & 0 & 0 & 0 & 12 & 0 &\cdots \\
  0 & 0 & 0 & 0 & 0 & 20 &\cdots \\
  \vdots & \vdots & \vdots & \vdots & \vdots & \vdots & \ddots \\
\end{array}%
\right),$$ $$\mathbf{A}^3=\left(%
\begin{array}{ccccccc}
  0 & 0 & 0 & 6 & 0 & 0 & \cdots \\
  0 & 0 & 0 & 0 & 24 & 0 & \cdots \\
  0 & 0 & 0 & 3 & 0 & 60 & \cdots \\
  0 & 0 & 0 & 0 & 0 & 0 & \cdots \\
  \vdots & \vdots & \vdots & \vdots & \vdots & \vdots & \ddots \\
\end{array}%
\right),$$ and so on, the general solution of problem
(\ref{4toEjemplo}) is given by
 $$\begin{array}{c}
  y(t) = x_0(t)=\left(\text{1st.\ row\ of\ }\left(e^{\mathbf{A}.\dfrac{t^2}2}\right)\right).\left(%
 \begin{array}{c}
  c_0 \\
  c_1 \\
  c_2 \\
  c_3 \\
  \vdots \\
 \end{array}%
 \right)=\hskip4.5cm \\
   = \left(\text{1st.\ row\ of\ }\left(\mathbf{I} + \ds \sum_{j=1}^{+\infty} \frac1{j!}
   \left(\mathbf{A}.\dfrac{t^2}2\right)^j\right)\right).\left(%
 \begin{array}{c}
  c_0 \\
  c_1 \\
  c_2 \\
  c_3 \\
  \vdots \\
 \end{array}%
 \right)= \hskip2cm \\
   = \left(\text{1st.\ row\ of\ }\mathbf{I} + \ds \sum_{j=1}^{+\infty} \frac1{j!}
   \left(\text{1st.\ row\ of\ }(\mathbf{A})^j.\left(\dfrac{t^2}{2}\right)^j\right)\right).\left(%
 \begin{array}{c}
  c_0 \\
  c_1 \\
  c_2 \\
  c_3 \\
  \vdots \\
 \end{array}%
 \right)= \\
  \ \\
 = c_0 + c_1 .\dfrac{t^2}2 + \frac1{2!}.2! c_2 .\left(\dfrac{t^2}2\right)^2 +
 \frac1{3!}.3! c_3 \left(\dfrac{t^2}2\right)^3 + \dots + \frac1{j!}.j! c_j \left(\dfrac{t^2}2\right)^j + \dots =
 \hskip7mm \\
 \ \\
 = c_0 + c_1 \left(\dfrac{t^2}2\right) + c_2 \left(\dfrac{t^2}2\right)^2 + c_3 \left(\dfrac{t^2}2\right)^3
 + \dots + c_j \left(\dfrac{t^2}2\right)^j + \dots \hskip3.2cm \\
 \end{array}$$

 Since the initial condition satisfies $y_0=y(0)=x_0(0)=c_0$ and
for all integer $j\ge 0$ it is clear that $x_j(t) = (y(t))^{j+1}$,
it is deduced that $c_j = x_j(0) = (y(0))^{j+1} = c_0^{j+1}$ for
all $j \ge 1$. As a consequence the solution calculated above
takes the form
\begin{equation}\label{SOLUserie4}
\begin{array}{c}
 y(t) = y_0 + y_0^2 \left(\dfrac{t^2}2\right) + y_0^3 \left(\dfrac{t^2}2\right)^2 + y_0^4 \left(\dfrac{t^2}2\right)^3
 + \dots + y_0^{j+1} \left(\dfrac{t^2}2\right)^j + \dots = \hskip1.9cm \\
\ \\
 = y_0 \left[1 + y_0 \left(\dfrac{t^2}2\right) + \left(y_0 \dfrac{t^2}2\right)^2 + \left(y_0 \dfrac{t^2}2\right)^3
 + \dots + \left(y_0 \dfrac{t^2}2\right)^j + \dots \right] = \\
\ \\
 = y_0.\dfrac{1}{1-\left(\dfrac{y_0 t^2}2\right)} =
\dfrac{2y_0}{2-y_0.t^2}, \hskip4.7cm \\
\end{array}
\end{equation}
defined under the assumption $|y_0.t^2| < 2$, that is  $t^2 <
\dfrac{2}{|y_0|}$, if $y_0\neq 0$. Note that in the open interval
$-\sqrt{\dfrac2{|y_0|}} < t < \sqrt{\dfrac2{|y_0|}},$ solutions
(\ref{SOLU4toEjemplo}) and (\ref{SOLUserie4}) agree each to other
and solution (\ref{SOLUserie4}) accepts an extension to all real
number $t\neq \pm \sqrt{\dfrac2{|y_0|}}$

Besides, for $y_0 = 0$ the solution (\ref{SOLUserie4}) is the zero
series.

\end{itemize}


\section{Some conclusions}
\ \\ \vskip-1cm

It is impossible to deny the importance of the role of the
differential equations in all the processes which involve a
dynamical component of a model for continuous variables. These
processes are illustrated in a large variety of domains related,
for instance, to applications to Astronomy, Biology, Chemistry,
Economy, Engineering and Physics.

The necessity to describe the behavior of the exact or
approximated solutions of those dynamical models provides an
understanding of the underlying processes. The fact of computing
the exact solutions of a differential equation is a desirable
purpose but in some cases it is a hard task to accomplish, in most
of them it is impossible. So, finding alternative ways to solve
ordinary differential equations are always good news or, at least,
it could open some paths to obtain approximated solutions. For
example, keeping in mind that the solution of the system
\[
\mathbf{X}'(t) = \mathbf{A}(t).\mathbf{X}(t) + \mathbf{b}(t)
\]
with initial vector value \quad $X(t_0) = \left(%
\begin{array}{c}
  c_0 \\
  c_1 \\
  c_2 \\
  c_3 \\
  \vdots \\
\end{array}%
\right) = \mathbf{C}$ \quad is given by formula (\ref{SOLGRAL})
\[
\mathbf{X}(t) = e^{\mathbf{M}(t)}. \mathbf{X}(t_0) + \ds
\int_{t_0}^t e^{[\mathbf{M}(t)-\mathbf{M}(r)]}.\mathbf{b}(r)\, dr,
\]
where the matrix $\mathbf{M}(t) = \ds \int_{t_0}^t \mathbf{A}(s)\,
ds$, one possibility consists in replacing the exponential matrix
in that formula (\ref{SOLGRAL}) by its $n$-th partial sum. In this
way the sequence of approximations to the desired solution is
defined by
\[
\mathbf{X}_0(t) = \mathbf{X}(t_0) = \mathbf{C}
\]
and for each integer $n\ge 1$
\[
\mathbf{X}_n(t) = \left( \mathbf{I} + \ds
\sum_{j=1}^{n-1}\dfrac{[\mathbf{M}(t)]^j}{j!}\right)\mathbf{X}(t_0)
+ \ds \int_{t_0}^t \left( \mathbf{I} + \ds
\sum_{j=1}^{n-1}\dfrac{[\mathbf{M}(t)-\mathbf{M}(s)]^j}{j!}\right)
\mathbf{b}(s)\, ds.
\]
For the special case of a constant infinite dimensional matrix
$\mathbf{A}(t) = \mathbf{A}$, the matrix $\mathbf{M}(t)$ takes the
form $\mathbf{M}(t) = \ds \int_{t_0}^t \mathbf{A}\, ds = (t-t_0).
\mathbf{A}$ and then the last sequence is reduced to
\[
\mathbf{X}_n(t) = \left( \mathbf{I} + \ds \sum_{j=1}^{n-1}
\dfrac{(t-t_0)^j}{j!}\mathbf{A}^j\right)\mathbf{X}(t_0) + \ds
\int_{t_0}^t \left( \mathbf{I} + \ds
\sum_{j=1}^{n-1}\dfrac{(t-s)^j}{j!}
\mathbf{A}^j\right)\mathbf{b}(s)\, ds,
\]
for each integer $n\ge 1$.

In particular, the method presented in this note which consists to
transform a nonlinear differential equation into a system of
infinite linear ordinary differential equations has the advantage
to translate the difficulty to treat directly with nonlinearity
into the problem of the treatment of the infinite quantity of
linear ODE with constant coefficients or simple variable ones. The
methodology of reducing the difficult of nonlinearity keeps a
certain analogy between the methodology of reducing a $n$th-order
ODE to a first-order system of $n$ ODE because, in general,
solving an isolated linear ODE with constant coefficients is
easier than solving a nonlinear one.

However, the transformation of a nonlinear ODE to a system of
infinite linear ODE's is only developed here for the case of
first-order differential equations; this is the first restriction
of our method. A second limitation is due to the hypotheses about
the smoothness of functions $\varphi$ and $g$ in problems
(\ref{PHVI}) and (\ref{PNHVI}) and the existence of the series
expansion detailed in the theorem. The third constraint of this
method is the form of the derivative $y'(t)$ in the first two
problems. A more general situation to be studied with this
procedure in a next work could concern an ordinary differential
equation like $y'(t) = \varphi(y,t)$, for more general function
$\varphi(y,t)$.

\vskip4mm

\noindent {\Large \textbf{Acknoledgements}}

\vskip4mm

The author wants to thanks Professor Andr\'es Kowalski,
Departamento de Física, Facultad de Ciencias Exactas, UNLP, who
interested him to study the present subject.

\end{document}